\documentclass[11pt, reqno]{amsart}
\usepackage{graphicx, amssymb, amsmath, amsthm}
\usepackage[utf8]{inputenc}
\usepackage{epsfig}
\usepackage{hyperref}
\usepackage{comment}
\usepackage{mathrsfs}
\numberwithin{equation}{section}
\usepackage{amsthm}
\usepackage{subfigure}
\usepackage{tikz}
\usepackage{here}
\usepackage{float}
\usepackage{pifont}
\usepackage{enumitem}
\usetikzlibrary{matrix,arrows}
\usetikzlibrary{shapes}
\usetikzlibrary{calc}
\usetikzlibrary{arrows}
\usetikzlibrary{decorations.pathreplacing,decorations.markings}
\usepackage[all]{xy}

\usepackage{tikz-cd}

\usetikzlibrary{patterns}

\newtheorem{theorem}{Theorem}[section]

\newtheorem{lemma}[theorem]{Lemma}

\newtheorem{corollary}[theorem]{Corollary}
\newtheorem{conjecture}[theorem]{Conjecture}

\theoremstyle{definition}
\newtheorem{definition}[theorem]{Definition}
\newtheorem{remark}[theorem]{Remark}

\makeatletter
\newcommand{\Extend}[5]{\ext@arrow0099{\arrowfill@#1#2#3}{#4}{#5}}
\makeatother

\DeclareMathOperator{\Lip}{Lip}
\DeclareMathOperator{\dist}{dist}
\DeclareMathOperator{\diam}{diam}

\begin{document}

\title[Rational homology vanishing theorem]{A note on rational homology vanishing theorem for hypersurfaces in aspherical manifolds}
\author{Shihang He}

\address[Shihang He]{Key Laboratory of Pure and Applied Mathematics,
School of Mathematical Sciences, Peking University, Beijing, 100871, People's Republic of China}
\email{hsh0119@pku.edu.cn}
\author{Jintian Zhu}
\address[Jintian Zhu]{Institute for Theoretical Sciences, Westlake University, 600 Dunyu Road, 310030, Hangzhou, Zhejiang, People's Republic of China}
\email{zhujintian@westlake.edu.cn}

\maketitle
\begin{abstract}
In this note, Gromov's reduction \cite{Gro20}, from the aspherical conjecture to the generalized filling radius conjecture, is generalized to the smooth $\mathbb Q$-homology vanishing conjecture (see Remark \ref{Rmk: smooth vanishing}) in the case of hypersurface. In particular, we can show that any continuous map from a closed $4$-manifold admitting positive scalar curvature to an aspherical $5$-manifold induces zero map between $H_4(\cdot,\mathbb Q)$. As a corollary, we obtain the following aspherical splitting theorem: if a complete orientable aspherical Riemannian $5$-manifold has nonnegative scalar curvature and two ends, then it splits into the Riemannian product of a closed flat manifold and the real line.
\end{abstract}\section{Introduction}

A basic problem in the research of scalar curvature is to classify all closed manifolds with positive scalar curvature. The problem is completely solved in dimension two and three: a closed orientable surface admitting positive scalar curvature is the $2$-sphere and closed orientable $3$-manifolds admitting positive scalar curvature are those having prime decomposition in the form of 
$$(\mathbb S^3/\Gamma_1)\#\cdots \#(\mathbb S^3/\Gamma_k)\#l(\mathbb S^2\times \mathbb S^1).$$
In dimensions four and five only partial classification result was achieved by Chodosh-Li-Liokumovich \cite{CLL23} assuming the manifold to be sufficiently connected.

Although no complete classification has been obtained in dimension no less than four, it is believed that 
a closed orientable manifold admitting positive scalar curvature should have a finite cover constructed from spheres. The explicit statement was raised by Gromov \cite[page 96]{Gro23} as the following {\it $\mathbb Q$-homology vanishing conjecture}:
\begin{conjecture}\label{Conj: Q vanishing}
    Let $M^n$ be a closed manifold admitting positive scalar curvature. For any continuous map $f:M\to X$ mapping $M$ into an aspherical topological space $X$, we have $f_*([M])=0$ in $H_n(X,\mathbb Q)$.
\end{conjecture}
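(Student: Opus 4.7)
The plan is to adapt Gromov's reduction scheme, originally designed to deduce the closed aspherical positive scalar curvature conjecture from a filling-radius inequality, to this relative setting. The overall strategy is to convert the homological hypothesis $f_*[M]\ne 0$ into a geometric filling problem in the universal cover $\widetilde{X}$, and then to use positive scalar curvature of $M$ to produce a contradiction via a filling-radius or width estimate.

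First I would argue by contradiction. Assuming $f_*[M]\ne 0$ in $H_n(X;\mathbb Q)$, pass to a suitable finite cover (permitted by the rational coefficients) to replace the rational class by an integrally nonzero one. Because $X$ is aspherical, the universal cover $\widetilde{X}$ is contractible; form the pullback cover $\widehat{M}\to M$ along $\widetilde{X}\to X$, and lift $f$ to a $\pi_1(X)$-equivariant map $\widetilde{f}:\widehat{M}\to\widetilde{X}$. The contractibility of $\widetilde{X}$ then supplies a singular chain $W$ with $\partial W=\widetilde{f}(\widehat{M})$; the whole point of the rational non-vanishing hypothesis is that every such filling must be geometrically large in a quantitative sense. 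This is the content of the ``generalized filling radius conjecture'' to which the abstract reduces the problem.

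For the hypersurface case $\dim X=\dim M+1$, I would then set up a $\mu$-bubble (or weighted stable minimal hypersurface) problem inside $\widetilde{X}$, using two disjoint deck-translates of the lifted cycle on opposite sides of $W$ as barriers. The minimizer $\Sigma$ is a codimension-one stable hypersurface of dimension $\dim M$; in the target case $\dim M=4$ one sits safely below the Simons dimension, so $\Sigma$ is smooth. Propagating the PSC of $M$ into a PSC-type inequality on $\Sigma$ via second variation and, if necessary, performing one further dimension descent to a $3$-dimensional slice, one can invoke the known classification/obstruction results for PSC $4$- and $3$-manifolds (Chodosh--Li--Liokumovich and the Schoen--Yau/Gromov--Lawson toolbox) to force a topological contradiction with the non-triviality of the class carried by $\widetilde{f}(\widehat{M})$.

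The main obstacle, in my view, is the filling/barrier construction in the non-compact cover $\widetilde{X}$: one must ensure the $\mu$-bubble minimizer is compact, remains in a controlled region, and actually represents (a nonzero multiple of) the lifted homology class rather than a homologous competitor that drifts to infinity. This combines (i) a transfer argument to clear denominators and promote the rational class to an integral one on a finite cover, (ii) an equivariant use of the deck group action to produce separating barriers from $W$, and (iii) a monotonicity or volume-growth input ensuring the minimizer does not degenerate. By comparison, the scalar-curvature-to-topology step in the PSC $4$-manifold regime is by now fairly standard, so the conceptual novelty of the argument should live in the reduction to the filling problem rather than in the final PSC obstruction.
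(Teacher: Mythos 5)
You are attempting to prove Conjecture \ref{Conj: Q vanishing}, which the paper does not prove and explicitly leaves open: it only establishes a conditional reduction in the hypersurface case (Theorem \ref{Thm: vanishing}, for $\dim X=\dim M+1$ and $3\le n\le 7$, assuming the generalized filling radius conjecture in dimension $n-1$) and the unconditional case of aspherical $5$-manifolds (Corollary \ref{Cor: 5D}), where the filling radius input is available from \cite{CL20, LM23}. Your proposal is framed as a proof of the full conjecture but in fact only sketches the hypersurface case, and even there it would require the generalized filling radius conjecture in dimension $n-1$, currently known only for $n-1\le 3$. At best you could hope for the conditional reduction, not the conjecture itself; the general case (arbitrary codimension, $X$ merely an aspherical space) is untouched by your outline.

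Within the hypersurface case, the central step of your sketch is set up in the wrong space. You propose to run the $\mu$-bubble problem ``inside $\widetilde X$'' with deck translates of a filling chain as barriers, and to ``propagate the PSC of $M$'' to the minimizer via second variation. But $X$ carries no curvature hypothesis whatsoever, so a stable minimizer in $\widetilde X$ inherits nothing from the positive scalar curvature of $M$. The correct construction runs the $\mu$-bubble in the induced cover $\tilde M$ of $M$ (the fiber product of $f$ with the cyclic or universal cover of $X$), which is non-compact, carries the lifted PSC metric, and is where the separating hypersurface $\Sigma^{n-1}$ with $\mathbb T^1$-stabilized scalar curvature $R\ge 1$ is produced. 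The contradiction then comes not from a classification of PSC $4$-manifolds but from a quantitative filling argument: by Poincar\'e duality one finds a closed curve (or, for non-compact $X$, a line) with nonzero intersection number with $f_*([M])$, passes to the associated cyclic cover $\tilde X$ with $H_n(\tilde X,\mathbb Z)=0$, pushes $\Sigma$ forward by the proper lift $\tilde f$, and fills $\tilde f_\#\Sigma$ by a chain supported in a bounded neighborhood of its image using the filling radius bound together with a uniform filling lemma for aspherical manifolds and a systole lower bound on $\tilde X$; the resulting cycle has nonzero intersection with the dual curve, which is the contradiction. Your sketch is missing each of these ingredients: the Poincar\'e-dual $1$-cycle, the systole estimate on the cyclic cover (obtained by choosing the index $k$ of the cover large), the properness of the lifted map ensuring the minimizer's complement maps far from the dual curve, and the controlled relative filling of $\tilde f_\#\Sigma$.
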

\begin{remark}\label{Rmk: smooth vanishing}
    For convenience, if $X$ turns out to be a smooth aspherical manifold rather than just a topological space, we use the name {\it smooth $\mathbb Q$-homology vanishing conjecture} to call this special version of Conjecture \ref{Conj: Q vanishing}.
\end{remark}

It is worth mentioning that Conjecture \ref{Conj: Q vanishing} is closely related to the strong Novikov Conjecture when $M$ is spin. For pioneering works in this direction one may refer to \cite{Ros83}, and for a detailed discussion of related topics one could see \cite{Han12} and references therein.

In the case when $n=2$ or $3$, the $\mathbb Q$-homology vanishing conjecture is well-known to be true with the help of the topological classification for closed manifolds with positive scalar curvature (see Appendix \ref{Append} for details). When $X$ is a Cartan-Hadamard manifold, the conjecture was shown to be true in \cite{GL83} by Gromov-Lawson under the assumption that $M$ is spin. In \cite{SY87}, Schoen-Yau mentioned they could obtain the same result without the spin condition. 

As a special case of the smooth $\mathbb Q$-homology vanishing conjecture, the aspherical conjecture \cite{Gro86, SY87} asserts that any closed aspherical manifold cannot admit positive scalar curvature, which was recently verified independently by Gromov \cite{Gro20} and by Chodosh-Li \cite{CL20} up to dimension five (also see previous contribution from \cite{GL83, SY87} in lower dimensions). In particular, Gromov \cite{Gro20} made a reduction from the aspherical conjecture in dimension $n$ to the validity of the generalized filling radius conjecture in dimension $(n-2)$. Recall that for any orientable closed Riemannian manifold $(M,g)$ its {\it filling radius} is defined to be
$$r_f(M,g):=\inf\{r>0|\mathcal K_*:H_n(M,\mathbb Z)\to H_n(U_r(\mathcal K(M)),\mathbb Z)\mbox{ is zero map}\},$$
where $\mathcal K$ is the Kuratowski embedding given by
$$\mathcal K:M\to L^\infty(M),\,p\mapsto \dist_g(p,\cdot)$$
and $U_r(\mathcal K(M))$ is denoted to be the $r$-neighborhood of $\mathcal K(M)$ in $L^\infty(M)$.
As a preparation, we also recall the $\mathbb T^l$-stabilized scalar curvature from \cite{Gro20}.
\begin{definition}\label{Defn: stablized curvature}
Let $(M,g)$ be a closed Riemannian manifold and $R$ be a smooth function on $M$. We say that $M$ has $\mathbb T^l$-stabilized scalar curvature $R$ if there are $l$ positive smooth functions $v_1,v_2,\ldots,v_l$ on $M$ such that $(M\times \mathbb T^l,g+\sum_i v_i^2\mathrm d\theta_i^2)$ has scalar curvature $\tilde R$ in $M\times \mathbb T^l$, where $\tilde R$ is $\mathbb T^l$-invariant extension of $R$.
\end{definition} 
Then the {\it generalized filling radius conjecture} is stated as following
\begin{conjecture}\label{Conj: fill}
    Let $(M^n,g)$ be an orientable closed Riemannian manifold with $\mathbb T^l$-stabilized scalar curvature $R\geq 1$. Then the filling radius of $(M,g)$ is no greater then $C(n)$.
\end{conjecture}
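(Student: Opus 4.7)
The plan is induction on the dimension $n$, with the torus-stabilization index $l$ allowed to grow by one at each descent step; this is the warped $\mu$-bubble / torical dimension descent scheme pioneered by Gromov \cite{Gro20} and further refined by Chodosh--Li \cite{CL20}. For the base case $n=2$, a direct computation expresses the scalar curvature of $(M\times \mathbb T^l, g + \sum v_i^2 \mathrm d\theta_i^2)$ as $R_M - 2\Delta \log V - |\nabla \log V|^2 - \sum |\nabla v_i|^2/v_i^2$ with $V = \prod v_i$, so that $R_M - 2\Delta \log V \geq 1$. Integrating this inequality and invoking Gauss--Bonnet forces $M \cong \mathbb S^2$ with $\A(M) \leq 8\pi$, after which a diameter (and hence filling radius) bound follows from standard drifted-scalar-curvature analysis on the sphere.

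For the inductive step, assume the conjecture holds in dimensions $\leq n-1$ for all $l$, and suppose towards contradiction that $(M^n,g)$ satisfies the hypotheses yet $r_f(M,g) > D$ for a sufficiently large $D$. The Kuratowski definition of $r_f$ provides points $p,q \in M$ whose separation is homologically essential at scale $D$. I would construct a warped $\mu$-bubble $\Sigma \subset M$ separating $p$ from $q$, with the warping profile built out of $v_1,\ldots,v_l$ and the signed distance to a level set between $p$ and $q$. The second variation inequality, together with a conformal rescaling by the first positive eigenfunction of the stability operator, then yields an induced metric $g_\Sigma$ on $\Sigma$ admitting $\mathbb T^{l+1}$-stabilized scalar curvature $\geq 1$.

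The step I expect to be the principal obstacle is transferring the filling-radius lower bound on $(M,g)$ to a filling-radius lower bound on $(\Sigma, g_\Sigma)$. The filling radius is intrinsic to $\Sigma$ via its own Kuratowski embedding into $L^\infty(\Sigma)$, whereas the $\mu$-bubble construction only produces $\Sigma$ as a separating hypersurface in $M$. One needs a metric bridge: because $\Sigma$ separates points at intrinsic distance $\sim D$ inside $M$, a Urysohn-type width of $\Sigma$ must be comparably large, and this width in turn feeds into a filling-radius lower bound via a Federer--Fleming coning argument. Making this quantitative, while simultaneously controlling the newly introduced stabilizing function on $\Sigma$ in terms of the original $(v_i)$, is the technical heart of the argument.

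Once this descent step is in place, iterating it $n-2$ times reduces to the base case and delivers a contradiction for $D$ sufficiently large, thereby yielding the dimensional constant $C(n)$.
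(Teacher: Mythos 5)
The statement you are trying to prove is Conjecture~\ref{Conj: fill}, which the paper does not prove and does not claim to prove: it is an open conjecture, used only as a \emph{hypothesis} in Theorem~\ref{Thm: vanishing}, and the paper explicitly records that it is currently known only up to dimension three (via \cite{Gro83, Gro20, CL20, LM23}). So there is no proof in the paper to compare against, and any complete argument here would be a substantial new result.

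On the merits of your sketch: the overall scheme (torical symmetrization plus warped $\mu$-bubble descent, with $l$ increasing by one at each step) is indeed the framework Gromov and Chodosh--Li use for related statements, and your base case is essentially fine once you replace the claimed ``diameter bound'' by the correct mechanism --- integrating the stabilized scalar curvature identity against Gauss--Bonnet gives an \emph{area} bound $\mathrm{Area}(M)\le 8\pi$, and the filling radius is then controlled by Gromov's filling inequality $r_f\le c_n\,\mathrm{vol}^{1/n}$ from \cite{Gro83}, not by a diameter estimate. The fatal problem is the step you yourself flag as ``the principal obstacle'': transferring the filling-radius lower bound from $(M,g)$ to the slice $(\Sigma,g_\Sigma)$. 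This is not a technical detail to be made quantitative later --- it is exactly the open problem. A hypersurface separating two homologically essential regions of $M$ at scale $D$ need not have large \emph{intrinsic} filling radius, and the implication you invoke runs the wrong way: Federer--Fleming coning and width estimates give \emph{upper} bounds on filling radius (small width implies small filling radius), not lower bounds on the filling radius of $\Sigma$ from separation data in $M$. The known three-dimensional case is not proved by this naive descent either; Liokumovich--Maximo and Chodosh--Li use a waist/slice-and-dice argument exploiting special features of surfaces (genus and diameter control of the slices). As written, your proposal restates the conjecture's difficulty rather than resolving it, so it does not constitute a proof.
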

The generalized filling radius conjecture is currently known to be true up to dimension three \cite{Gro83, Gro20, CL20,LM23}.
In this note, we plan to generalize Gromov's reduction to the above smooth $\mathbb Q$-homology vanishing conjecture in the case of hypersurface (i.e. $\dim X=\dim M+1$).
Our main theorem is stated as following

\begin{theorem}\label{Thm: vanishing}
    Let $M^n$, $3\leq n\leq 7$, be a closed manifold admitting a smooth metric with positive scalar curvature and $X^{n+1}$ be an aspherical manifold. If the generalized filling radius conjecture is true in dimension $(n-1)$, then for any continuous map $f:M\to X$ we have $f_*([M])=0$ in $H_n(X,\mathbb Q)$.
\end{theorem}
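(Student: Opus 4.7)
The plan is to argue by contradiction, following the overall template of Gromov's reduction \cite{Gro20} and adapting it to the codimension-one ``map into an aspherical target'' setting. Suppose $f_{*}[M]\neq 0$ in $H_{n}(X,\mathbb Q)$. I first pass to the cover $\bar X\to X$ corresponding to $f_{*}\pi_{1}(M)\subset \pi_{1}(X)$: $\bar X$ is again aspherical, the lift $\bar f: M\to \bar X$ is $\pi_{1}$-surjective, and since the covering projection sends $\bar f_{*}[M]$ to $f_{*}[M]$, non-triviality is preserved. If $\pi_{1}(\bar X)$ were finite then asphericity would force $\bar X$ contractible and $H_{n}(\bar X,\mathbb Q)=0$; so $\pi_{1}(\bar X)$ is infinite. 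Dropping bars, let $\tilde X\to X$ be the universal cover (contractible of dimension $n+1$) and form the pullback $\tilde M=M\times_{X}\tilde X$, which is connected by $\pi_{1}$-surjectivity and inherits a complete pulled-back metric with positive scalar curvature. The induced $\tilde f:\tilde M\to\tilde X$ is $\pi_{1}(X)$-equivariant, and by representing $f$ smoothly and using compactness of $M$, I may assume $\tilde f$ is Lipschitz with respect to an auxiliary Riemannian metric pulled back to $\tilde X$ from $X$.

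Fix a basepoint $p_{0}\in\tilde X$ and define $\rho(\cdot):=\dist_{\tilde X}(p_{0},\tilde f(\cdot))$ on $\tilde M$. Equivariance together with the properly discontinuous free action of $\pi_{1}(X)$ on $\tilde X$ forces $\tilde f(\tilde M)$ to contain a full $\pi_{1}(X)$-orbit, and hence to be unbounded, so $\rho$ is a Lipschitz exhaustion-type function with unbounded range. For a parameter $L\gg 1$ I perform a warped $\mu$-bubble construction in $\tilde M$, minimizing an appropriate weighted area functional among codimension-one hypersurfaces separating $\{\rho\le L/3\}$ from $\{\rho\ge 2L/3\}$, with the weight chosen to blow up at the boundaries of the slab so that the minimizer is trapped in the interior. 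Since $\dim\Sigma=n-1\le 6$, the Schoen--Yau regularity theory produces a smooth closed orientable hypersurface $\Sigma^{n-1}\subset\tilde M$, and Gromov's torus-warping trick endows it with $\mathbb T^{1}$-stabilized scalar curvature uniformly bounded below by a positive constant. After normalization, Conjecture \ref{Conj: fill} in dimension $n-1$ yields the upper bound $r_{f}(\Sigma)\le C(n)$.

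The remaining task, and in my view the main obstacle, is to establish a matching lower bound of the form $r_{f}(\Sigma)\gtrsim L$, contradicting the above once $L$ is chosen large. The geometric reason such a bound should hold is that $\Sigma$ separates the two pieces of $\tilde M$ on which $\rho\le L/3$ and $\rho\ge 2L/3$, and via $\tilde f$ these map to correspondingly separated subsets of the contractible space $\tilde X$. A hypothetical small filling of $[\Sigma]$ realized in $L^{\infty}(\Sigma)$ can be transferred, via Gromov's Lipschitz extension theorem and the contractibility of $\tilde X$, into a filling inside $\tilde X$ of controlled $L^{\infty}$-width; but any such filling must connect the two $\tilde f$-separated sides and so have width at least a definite multiple of $L$. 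Making this transfer quantitatively precise---carefully tracking the distortion between the induced PSC metric on $\Sigma$ and the auxiliary metric pulled back from $X$, and verifying that the separation persists through the Lipschitz extension---is the technical heart of the argument and the place where the asphericity of $X$ is used in an essential way. Granted that matching, the two bounds $r_{f}(\Sigma)\le C(n)$ and $r_{f}(\Sigma)\gtrsim L$ contradict each other for $L$ large, proving $f_{*}[M]=0$ in $H_{n}(X,\mathbb Q)$.
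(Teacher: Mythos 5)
Your skeleton matches the paper up to the construction of $\Sigma$: reduce to a contradiction, lift to a cover of $X$, use compactness of $M$ to make the lifted map proper, run a warped $\mu$-bubble argument in a far-away slab of the covering space of $M$ to produce a closed hypersurface $\Sigma^{n-1}$ with $\mathbb T^1$-stabilized scalar curvature $\ge 1$, and invoke Conjecture \ref{Conj: fill} in dimension $n-1$. But the closing step of your argument has a genuine gap, and it is not merely the quantitative transfer you flag as ``the technical heart.'' You claim that any filling of $\tilde f_\#\Sigma$ obtained by pushing the $L^\infty(\Sigma)$ filling into $\tilde X$ ``must connect the two $\tilde f$-separated sides and so have width at least a definite multiple of $L$.'' This is false: a filling of $\tilde f_\#\Sigma$ only needs to be a chain with boundary $\tilde f_\#\Sigma$, and the paper's Lemma \ref{lem2} constructs exactly such a chain $\sigma$ supported in a fixed $\tilde C(n,X,K)$-neighborhood of $\tilde f(\Sigma)$, independent of $L$. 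So no lower bound $r_f(\Sigma)\gtrsim L$ can come out of this mechanism; the existence of a nearby filling is a feature of the construction, not the contradiction. (Note also that $\tilde X$ being contractible does not by itself give a \emph{diameter-controlled} extension; the paper needs Lemmas \ref{lem: topological construction}--\ref{lem1}, which use cocompactness of the deck action and finite CW approximations, plus a systole hypothesis on the intermediate cover handled by passing to a $k$-fold cyclic cover.)

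What is missing is the object that actually detects $f_*([M])\neq 0$ and turns the controlled filling into a contradiction: a $1$-cycle transverse to the image. When $X$ is closed, Poincar\'e duality produces a closed curve $\gamma$ with nonzero intersection number with $f_*([M])$; the paper then works in the infinite cyclic cover $\tilde X_k$ associated to $k[\gamma]$ (so that $\gamma$ lifts to a \emph{closed} curve $\tilde\gamma$, $H_n(\tilde X_k,\mathbb Z)=0$, and the systole exceeds $6C(n-1)$ for $k$ large), chooses the slab so far out that $\tilde f(\tilde M\setminus\Omega)$ misses the $\tilde C$-neighborhood of $\tilde\gamma$, and concludes that $\tilde f_\#\Omega-\sigma$ is an $n$-cycle with nonzero intersection number with $\tilde\gamma$, contradicting $H_n(\tilde X_k,\mathbb Z)=0$. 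When $X$ is open one needs instead a geodesic line obtained as a limit of relative dual curves, together with a separate compactness argument for $\bar f^{-1}(U_\rho(\bar l))$. Your proposal never constructs this dual $1$-cycle and never uses $f_*([M])\neq 0$ beyond ruling out finite $\pi_1$, so even granting the quantitative filling transfer, no contradiction is reached. To repair the argument you would essentially have to import the paper's Case 1/Case 2 dichotomy and the intersection-number bookkeeping wholesale.
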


In particular, we are able to answer Conjecture \ref{Conj: Q vanishing} affirmatively for aspherical manifold $X$ of dimension 5.

\begin{corollary}\label{Cor: 5D}
    Let $X$ be an aspherical $5$-manifold. Then there is no smooth metric with positive scalar curvature on any closed $k$-manifold $M$ representing a non-zero homology class $\beta\in H_k(X,\mathbb Q)$ for all $2\leq k\leq 5$. 
\end{corollary}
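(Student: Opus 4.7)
My plan is to prove the corollary dimension by dimension. The cases $k = 2, 3$ are classical (cf.\ Appendix~\ref{Append}): a closed orientable PSC surface is a disjoint union of $2$-spheres, each null-homotopic in an aspherical target since $\pi_2$ vanishes; and the topological classification of PSC $3$-manifolds gives $\pi_1(M^3)$ as a free product of finite groups and copies of $\mathbb{Z}$, for which $H_3(\cdot, \mathbb{Q}) = 0$, so the classifying map kills $[M]$ and hence $f_*[M] = 0$ in $H_3(X, \mathbb{Q})$. The case $k = 4$ is a direct application of Theorem~\ref{Thm: vanishing} with $n = 4$: $M^4$ with PSC mapping to the aspherical $X^5$ is the hypersurface case, and the required generalized filling radius conjecture in dimension $3$ is known.

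The case $k = 5$ is where the real work lies, since $\dim M = \dim X$. We may assume $M$ and $X$ are both orientable (otherwise $f_*[M] = 0$ automatically in rational homology), so that $f_*[M] = \deg(f) \cdot [X]$, and $f_*[M] \ne 0$ is equivalent to $\deg(f) \ne 0$; Poincar\'e duality then makes $f^* : H^*(X,\mathbb{Q}) \to H^*(M,\mathbb{Q})$ injective. Suppose first that $H^1(X,\mathbb{Q}) \ne 0$. Pick a non-zero $[\omega'] \in H^1(X,\mathbb{Q})$ and set $\omega = f^*\omega' \ne 0$ in $H^1(M,\mathbb{Q})$. Standard Schoen--Yau minimizing hypersurface descent yields a closed orientable hypersurface $N^4 \subset M$ admitting a PSC metric, with $[N] = \mathrm{PD}_M[\omega]$ (after clearing denominators). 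The already-proven $k = 4$ case applied to $f|_N : N \to X$ gives $f_*[N] = 0$ in $H_4(X,\mathbb{Q})$, while the projection formula gives
\[
  f_*[N] = f_*\bigl(f^*\omega' \frown [M]\bigr) = \omega' \frown f_*[M] = \deg(f) \cdot \mathrm{PD}_X[\omega'] \ne 0,
\]
a contradiction.

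The main obstacle is the residual subcase $H^1(X,\mathbb{Q}) = 0$, where the slicing has no starting $1$-form. My plan here is to invoke a degree-version of the $5$-dimensional aspherical conjecture, which should follow from the $\mu$-bubble techniques of \cite{Gro20, CL20}: lifting $f$ to the universal cover $\widetilde X$ (contractible since $X$ is aspherical), the distance from a chosen basepoint in $\widetilde X$ substitutes for the missing $1$-form, and the $\mu$-bubble slicing again produces a $\mathbb{T}^l$-stabilized PSC $3$-manifold of large filling radius, contradicting the known filling radius conjecture in dimension $3$. As a cleaner alternative, whenever $\pi_1(X)$ contains a finite-index subgroup with positive first Betti number, one can first pass to the corresponding finite (still aspherical) cover of $X$, lift $f$, and reduce directly back to the previous paragraph.
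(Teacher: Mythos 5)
Your treatment of $k=2,3,4$ matches the paper's (the paper routes $k=2,3$ through Theorem \ref{Thm: 2D} by surgery rather than your group-homology computation $H_3(B\pi_1(M),\mathbb Q)=0$, but both are standard and correct). The real divergence is at $k=5$, where the paper does no work at all: it simply cites \cite{Gro20, CL20, CLL23}, which already contain the statement that a closed manifold admitting a nonzero degree map to a closed aspherical $5$-manifold carries no PSC metric. You instead attempt an internal reduction of $k=5$ to $k=4$ via Schoen--Yau descent. Your descent step is correct as far as it goes: when $H^1(X,\mathbb Q)\neq 0$, pulling back a class, taking the area-minimizing Poincar\'e dual hypersurface $N^4$ (smooth since $\dim M=5$), and combining the projection formula with the already-established $k=4$ case does yield a contradiction. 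What this buys is a self-contained proof in the positive-first-Betti-number case; what it costs is that the residual case $H^1(X,\mathbb Q)=0$ is not vacuous (aspherical rational homology $5$-spheres exist, e.g.\ via the Davis reflection-group construction), and there your argument reduces to ``a degree-version of the aspherical conjecture should follow from $\mu$-bubble techniques,'' which is not a proof but precisely the theorem the paper cites wholesale. So you should either carry out that $\mu$-bubble argument in full or, more economically, cite \cite{Gro20, CL20, CLL23} for all of $k=5$ and drop the descent entirely, as the paper does. With that citation made explicit, your proof is complete; without it, the $b_1(X)=0$ subcase is a gap.
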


As an immediate application of Corollary \ref{Cor: 5D}, we are able to prove the following {\it aspherical splitting theorem}:
\begin{theorem}\label{Thm: splitting}
    Assume that $(X^{n+1},g)$, $n\leq 4$, is a complete orientable aspherical Riemannian manifold with two ends and with nonnegative scalar curvature. Then it splits into the Riemannian product $(X_0,g_0)\times \mathbb R$, where $(X_0,g_0)$ is a closed flat Riemannian manifold.
\end{theorem}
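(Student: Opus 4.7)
The plan is to slice $X$ between its two ends by an area-minimizing hypersurface, use Corollary~\ref{Cor: 5D} to exclude the positive-scalar-curvature alternative of the stability dichotomy, and extract the product splitting from the resulting rigidity.

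\textbf{Step 1 (separating minimizer).} Because $X$ has two ends, a smooth function $\rho$ taking distinct constant values near each end yields a compactly supported closed $1$-form $d\rho$ whose class in $H^1_c(X;\mathbb{Z})$ pairs as $\pm 1$ with any proper path between the ends and is hence non-torsion; Poincar\'e--Lefschetz duality converts this into a non-torsion class $\alpha\in H_n(X;\mathbb{Z})$ represented by a compact separating hypersurface. A $\mu$-bubble construction (with prescribed mean curvature providing barriers inside each end, exploiting the two-end structure) then yields a smooth compact embedded area-minimizing hypersurface $\Sigma$ in the class $\alpha$; regularity is automatic because $n\leq 6$, and $0\ne [\Sigma]=\alpha$ in $H_n(X;\mathbb{Q})$.

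\textbf{Step 2 (stability dichotomy).} Combining the stability of $\Sigma$ with the Gauss equation yields, for every $\phi\in C^\infty(\Sigma)$,
\[
\int_\Sigma\bigl(2|\nabla\phi|^2 + R_\Sigma\,\phi^2\bigr) \;\geq\; \int_\Sigma\bigl(R_g + |A|^2\bigr)\phi^2 \;\geq\; 0.
\]
Standard stability arguments (Fischer--Colbrie--Schoen for $n=2$, the conformal Laplacian with coefficient $\tfrac{4(n-1)}{n-2}\geq 2$ for $n\geq 3$) then produce the dichotomy: either $\Sigma$ admits a metric of positive scalar curvature, or the inequality saturates and $|A|\equiv 0$ on $\Sigma$, $R_g\equiv 0$ along $\Sigma$, $R_\Sigma\equiv 0$, and the first Jacobi eigenfunction is a positive constant. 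In the first alternative, Corollary~\ref{Cor: 5D} (applicable since $\dim X\leq 5$ and $X$ is aspherical) forces $i_*[\Sigma]=0$ in $H_n(X;\mathbb{Q})$, contradicting Step~1. The rigid alternative therefore prevails.

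\textbf{Step 3 (global splitting and flatness).} The constant Jacobi field drives a smooth one-parameter family of area-minimizing leaves $\Sigma_t$, each inheriting the same rigid conclusions, so in Fermi coordinates the ambient metric takes the product form $g=dt^2+g_\Sigma$ on a neighborhood of $\Sigma$. Openness plus a closedness argument (uniform estimates from totally geodesic leaves with ambient $R_g=0$) extends the foliation to all $t\in\mathbb{R}$, and the two-end hypothesis forces the saturated open set to exhaust $X$, whence $(X,g)\cong(\Sigma,g_\Sigma)\times(\mathbb{R},dt^2)$. Setting $X_0:=\Sigma$ we obtain a closed aspherical $n$-manifold ($n\leq 4$) with scalar-flat metric. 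By the aspherical conjecture in dimension $\leq 5$ \cite{Gro20,CL20} no PSC metric exists on $X_0$, so Kazdan--Warner and Bourguignon's rigidity upgrade scalar-flat to Ricci-flat; the Cheeger--Gromoll structure theorem then provides a finite cover of $(X_0,g_\Sigma)$ isometric to $T^k\times N$ with $N$ closed simply connected, and contractibility of the universal cover of $X_0$ forces $N$ to be a point, whence $(X_0,g_\Sigma)$ is flat.

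\textbf{Main obstacle.} The principal technical hurdle is the global extension of the local product structure in Step~3: one must secure uniform estimates on the foliation and rule out collapse or termination at finite parameter, using the inherited total geodesy and ambient scalar-flatness of every leaf combined with the two-end topology. Securing compactness of the $\mu$-bubble minimizer in Step~1 is a subsidiary difficulty in the noncompact setting; the essential use of Corollary~\ref{Cor: 5D} in Step~2 is where the main theorem of this paper does its work, and the non-triviality of $\alpha$ in \emph{rational} homology (not merely integer homology) is the key algebraic input that makes that step go through.
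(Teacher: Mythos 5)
Your core idea --- that the separating hypersurface class is non-trivial in $H_n(X,\mathbb Q)$ (detected by intersection with a line joining the two ends) and that Corollary \ref{Cor: 5D} therefore forbids it from carrying positive scalar curvature --- is exactly the key input of the paper's proof. But the overall architecture you build around it is different from the paper's and has genuine gaps.

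The paper's first move, which you are missing, is Kazdan's deformation theorem \cite{Kaz82}: a complete metric with $R\geq 0$ is either Ricci-flat or can be deformed to a complete metric with \emph{strictly} positive scalar curvature. In the Ricci-flat case the splitting follows at once from Cheeger--Gromoll (two ends give a line), with no minimal surface analysis at all; in the other case one only has to prove \emph{nonexistence}, which is done by producing a $\mu$-bubble $\partial\Omega_{\min}$ whose stability inequality, using $R\geq R_0>0$ on a compact slab, shows $\partial\Omega_{\min}$ admits PSC, contradicting Corollary \ref{Cor: 5D}. Because you skip this dichotomy, you are forced to extract the splitting itself from minimal-surface rigidity, and this is where your argument breaks down in two places. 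First, Step 1 is internally inconsistent: a $\mu$-bubble with non-trivial prescribed mean curvature $h$ is neither minimal nor area-minimizing, so the stable-minimal-hypersurface dichotomy of Step 2 does not apply to it; conversely, a genuine compact area minimizer in the class $\alpha$ need not exist in a noncompact manifold (the infimum may be zero or the minimizing sequence may escape). Moreover, with only $R\geq 0$ the $\mu$-bubble second variation produces the extra term $2h'+\tfrac{n+1}{n}h^2$, which cannot be made nonnegative for a decreasing $h$ blowing up at both ends of an interval unless one has a strictly positive lower bound on $R$ somewhere --- precisely what Kazdan's deformation supplies in the paper. Second, Step 3 (propagating the infinitesimal rigidity to a global foliation by totally geodesic leaves and then to a Riemannian product) is the hardest part of the route you chose; it is the content of separate rigidity theorems in the literature and you explicitly leave it as an ``obstacle'' rather than proving it. As written, the proposal does not close; the paper's Kazdan--Cheeger--Gromoll reduction is the missing idea that makes the whole argument short.
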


We point out that a similar phenomenon also appears in the research of manifolds with scalar curvature bounded from below by a negative constant (see \cite[Theorem 1.7]{HHLS23} for instance). Our Corollary \ref{Cor: 5D} could be used to improve \cite[Theorem 1.7]{HHLS23} to hold in dimension five.

\section*{Acknowledgement}
Both authors are supported by National Key R\&D Program of China Grant 2020YFA0712800 and the second-named author is also supported by the start-up fund from Westlake University. We are very grateful to Prof. Yuguang Shi for his constant encouragement and enlightening conversations on the topic. The authors are grateful to anonymous referees for their careful reading and helpful suggestions.

\section{The proof}

We begin with the following basic topological lemma.
\begin{lemma}\label{lem: topological construction}
    Given a continuous map $i:A\to Y$, where $Y$ is a contractible manifold and $A$ is a finite CW complex, for any $k\in \mathbb{Z}_+$ we can extend the map $i$ to a new continuous map $F:Z\to Y$, where $Z$ is a $k$-connected finite CW complex containing $A$ as a subcomplex and $F|_A = i$.
\end{lemma}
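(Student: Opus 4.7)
The plan is to construct $Z$ by successively attaching cells to $A$ in order to kill $\pi_0, \pi_1, \ldots, \pi_k$, extending the map to $Y$ at each stage using the contractibility of $Y$. If $A$ is disconnected, we first attach finitely many 1-cells joining pairs of components (a spanning tree on the set of components suffices) and extend $i$ over each new 1-cell by any path in $Y$ between the images of its endpoints; such paths exist because $Y$, being contractible, is path-connected. Write $Z_0$ for the resulting connected finite CW complex and $F_0 : Z_0 \to Y$ for the extension of $i$. The 1-skeleton of $Z_0$ is a finite graph, so $\pi_1(Z_0)$ is finitely generated; we pick loops representing a generating set, attach a 2-cell along each, and extend $F_0$ over each new 2-disk using a null-homotopy of the attaching map composed with $F_0$ (which is null-homotopic since $Y$ is contractible). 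This yields a simply connected finite CW complex $Z_1$ together with $F_1 : Z_1 \to Y$ extending $i$.

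For the inductive step, suppose for some $1 \le j \le k-1$ we have built a finite CW complex $Z_j$ containing $A$ as a subcomplex with $\pi_i(Z_j) = 0$ for $0 \le i \le j$, together with a continuous map $F_j : Z_j \to Y$ extending $i$. Since $Z_j$ is $j$-connected, the Hurewicz theorem gives $\pi_{j+1}(Z_j) \cong H_{j+1}(Z_j; \mathbb{Z})$, which is finitely generated because $Z_j$ has only finitely many cells. Choose maps $S^{j+1} \to Z_j$ representing a finite generating set and attach a $(j+2)$-cell along each to form $Z_{j+1}$. Cellular approximation shows that attaching cells of dimension $j+2$ leaves $\pi_i$ unchanged for $i \le j$, so $Z_{j+1}$ remains $j$-connected, while the chosen generators of $\pi_{j+1}$ become null-homotopic in $Z_{j+1}$, giving $\pi_{j+1}(Z_{j+1}) = 0$. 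Each composition $S^{j+1} \to Z_j \to Y$ is null-homotopic by contractibility of $Y$, and the null-homotopies extend $F_j$ over the newly attached cells to produce $F_{j+1} : Z_{j+1} \to Y$ extending $i$. Setting $Z = Z_k$ and $F = F_k$ then yields the desired $k$-connected finite CW complex with the required extension.

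The one delicate aspect is keeping $Z$ finite: we cannot a priori choose only finitely many generators for the higher homotopy groups without first knowing them to be finitely generated. This is precisely what dictates the order of operations---killing $\pi_0$ and then $\pi_1$ at the start so that the complex becomes simply connected, after which Hurewicz guarantees that every subsequent homotopy group to be killed is automatically finitely generated. Extending the map to $Y$ is automatic throughout the construction, since contractibility of $Y$ makes every sphere-valued composition null-homotopic.
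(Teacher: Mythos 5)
Your proposal is correct and follows essentially the same route as the paper: kill $\pi_0$ and $\pi_1$ using finiteness of $A$, then use the Hurewicz theorem inductively to see each higher homotopy group is finitely generated and kill it by attaching finitely many cells, with contractibility of $Y$ supplying the extension of the map. The only (cosmetic) differences are that you extend the map cell-by-cell rather than invoking the extension lemma once at the end, and you are slightly more careful about connecting the components of $A$ and about the dimension of the attached cells (cells of dimension $j+2$ to kill $\pi_{j+1}$, where the paper's write-up has an off-by-one slip).
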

\begin{proof}
        Since $A$ is a finite CW complex, it must be compact, which implies that the fundamental group  $\pi_1(A)$ is finitely generated. Then it is standard to construct a simply-connected finite CW complex $Z_1$ by attaching finitely many 2-cells to $A$ to eliminate $\pi_1(A)$. It is well-known that the cell-attaching procedure can further produce $k$-connected finite CW complex by induction. Assume that we have already obtained an $i$-connected finite CW complex $Z_i$. From the Hurewicz Theorem we know 
        $$\pi_{i+1}(Z_i) = H_{i+1}(Z_i)$$
        and in particular $\pi_{i+1}(Z_i)$ is finitely generated. By attaching finitely many $(i+1)$-cells to $Z_i$ to eliminate $\pi_{i+1}(Z_i)$ we can construct an $(i+1)$-connected finite CW complex $Z_{i+1}$. 
        
        Take $Z = Z_k$. Then $Z$ is a $k$-connected finite CW complex $Z$ containing $A$ as a subcomplex. 
        Since $Y$ is contractible, $i:A\longrightarrow Y$ can be extended to $F:Z\longrightarrow Y$ by the extension lemma \cite[Lemma 4.7]{Hat02}.
\end{proof}

Next we prove a quantitative filling lemma for homotopy groups in the universal covering of aspherical manifolds.
\begin{lemma}\label{lem1}
    Let $(X^m,g)$ be an aspherical manifold and $\pi:(\bar{X}^{m},\bar g)\longrightarrow (X^{m},g)$ be the universal Riemannian covering. Let $K\subset X$ be a compact set and $\bar{K} = \pi^{-1}(K)$. Then for any $r>0$ and $k\in \mathbb{Z}_+$, there is a constant $R=R(r,K,k)>0$ with the following property: for any continuous map $\alpha: \mathbb S^k\longrightarrow \bar{X}$, such that the image of $\alpha$ is contained in $\bar{K}$ with $diam(\alpha)\le r$, there is a continuous map $\beta: \mathbb B^{k+1}\longrightarrow\bar{X}$ with $diam(\beta)\le R$ and $\beta|_{\partial \mathbb B^{k+1}} = \alpha$, where we use $diam(\cdot)$ to denote the diameter of the image of a map.
\end{lemma}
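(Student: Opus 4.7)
The plan is to combine the contractibility of $\bar X$---which follows from the asphericity of $X$ together with $\bar X$ being simply connected---with a compactness reduction using the deck transformation action of $\pi_1(X)$ on $\bar X$. A fixed global contraction of $\bar X$ restricts to any compact subset as a null-homotopy whose image is itself compact, hence of bounded diameter; and up to a deck transformation the image of any admissible $\alpha$ can be forced into a single compact subset of $\bar X$ depending only on $r$ and $K$.

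First I would construct a compact set $W_0 \subset \bar X$ with $\pi(W_0) \supset K$. Cover $K$ by finitely many open sets $U_1,\dots,U_n\subset X$ that are evenly covered by $\pi$, pick compact subsets $K_i\subset U_i$ that still cover $K$, and set $W_0 := \bigcup_i s_i(K_i)$, where each $s_i$ is a continuous local section of $\pi$ over $U_i$. Given any admissible map $\alpha\colon \mathbb S^k\to\bar K$ with $\diam(\alpha)\le r$, pick $x_0\in\mathbb S^k$ and put $p := \alpha(x_0)$. Since $\pi(p)\in K\subset\pi(W_0)$ and $\pi_1(X)$ acts transitively on each fiber of $\pi$, there is a (unique) deck transformation $g$ with $g\cdot p\in W_0$; post-composing $\alpha$ with $g$ is an isometry of $\bar g$, so it preserves $\diam(\alpha)$ while forcing the image of the new $\alpha$ into the $r$-neighborhood of $W_0$. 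Let $W := \overline{N_r(W_0)}$, a compact subset of $\bar X$ depending only on $r$ and $K$.

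Since $X$ is aspherical, $\bar X$ has trivial higher homotopy groups and is therefore contractible. Fix once and for all a continuous contraction $H\colon \bar X\times[0,1]\to\bar X$ with $H(\cdot,0)=\mathrm{id}$ and $H(\cdot,1)\equiv x_\ast$. Identifying $\mathbb B^{k+1}$ with the quotient $(\mathbb S^k\times[0,1])/(\mathbb S^k\times\{1\})$, I would define $\beta(x,t):=H(\alpha(x),t)$. Then $\beta$ continuously extends $\alpha$, and its image lies in $H(W\times[0,1])$, a compact subset of $\bar X$ whose diameter $R=R(r,K,k)$ depends only on $W$, in particular not on the chosen $\alpha$ (and in fact not even on $k$). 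Post-composing the resulting $\beta$ with $g^{-1}$ yields the desired extension of the original $\alpha$ with the same diameter bound.

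The main obstacle I expect is the compactness reduction in the second paragraph: constructing a genuinely compact $W_0$ surjecting onto $K$, and then guaranteeing that $\overline{N_r(W_0)}$ is compact in $\bar X$. The former is a routine but fiddly argument using a finite evenly-covered open cover of $K$ together with local sections of $\pi$; the latter requires mild control on $(\bar X,\bar g)$---completeness of $\bar g$ being sufficient---and is automatic in the settings considered in this paper. Once past those points, the filling itself is essentially a tautology: contractibility supplies the extension, and compactness yields the uniform diameter bound.
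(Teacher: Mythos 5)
Your proof is correct, and it takes a genuinely different and somewhat more direct route than the paper's.

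Both arguments begin with the same compactness reduction: apply a deck transformation (an isometry of $\bar g$) so that the image of $\alpha$, which has diameter at most $r$, lies in a fixed compact set $W\subset\bar X$ depending only on $r$ and $K$. After that point, however, the two approaches diverge. You observe that $\bar X$ is contractible (asphericity plus simple connectivity plus Whitehead), fix a global contraction $H\colon\bar X\times[0,1]\to\bar X$ once and for all, and read the filling off as $\beta(x,t)=H(\alpha(x),t)$; the diameter bound then comes from compactness of $H(W\times[0,1])$. The paper instead avoids invoking a global contraction directly: it takes $A$ to be the closure of a smooth domain containing the ball in which $\Phi\circ\alpha$ lands, triangulates $A$ to get a finite CW complex, and uses its Lemma~\ref{lem: topological construction} to embed $A$ in a $k$-connected finite CW complex $Z$ together with an extension $F\colon Z\to\bar X$ of the inclusion; the filling is obtained inside $Z$ by $k$-connectedness and pushed to $\bar X$ via $F$, with the bound coming from compactness of $Z$. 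Your version is shorter, bypasses Lemma~\ref{lem: topological construction} entirely, and has the small bonus that $R$ is independent of $k$. Both versions need the same mild hypothesis to make the compact set argument go through: closed metric balls in $(\bar X,\bar g)$ must be compact, which holds since $\bar g$ is complete in all cases relevant to the paper (e.g.\ when $X$ is closed, or when $g_X$ is chosen complete); you flag this explicitly, which the paper leaves implicit. One minor nit: the deck transformation $g$ with $g\cdot p\in W_0$ need not be unique (several sheets of $W_0$ may lie over the same point of $K$), but only existence is used, so this does not affect the argument.
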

\begin{proof}
    Fix a point $q\in\bar{K}$ and a point $p$ in the image of the map $\alpha$. Since $\bar{X}$ is a regular covering space, we can find a deck transformation $\Phi$ such that $\dist(\Phi(p),q)\le D_0:= \diam(K)$ and so the image of $\Phi\circ\alpha$ is contained in the ball $ B_q(r+D_0)$. Take $A$ to be the closure of a smooth domain containing $B_q(r+D_0)$. It is well-known that $A$ admits a triangulation and so $A$ becomes a finite CW complex. From Lemma \ref{lem: topological construction} we can extend the inclusion map $i:A\to\bar X$ to a new continuous map $F:Z\to \bar X$, where $Z$ is a $k$-connected finite CW complex containing $A$ as a subcomplex and $F|_A=i$. Since the inclusion $i:A\to i(A)\subset\bar X$ is a homeomorphism, we can find a continuous map $\alpha_0:\mathbb S^k\to A\subset Z$ satisfying $i\circ \alpha_0=\Phi\circ \alpha$. From the $k$-connectedness of $Z$ the map $\alpha_0$ can be extended to a continuous map $\beta_0:\mathbb B^{k+1}\to Z$. Define 
    $$\beta=\Phi^{-1}\circ F\circ \beta_0.$$
    It is easy to verify $\beta|_{\partial \mathbb B^{k+1}}=\alpha$. 
    
    Recall that $Z$ is a finite CW complex. In paticular, $Z$ is compact and its image $F(Z)$ has bounded diameter. Take $R=\diam(F(Z))+1$. Then we have $\diam(F\circ\beta_0)<R$ since the image of $F\circ\beta_0$ is contained in $F(Z)$. Using the fact that deck transformations are isometries, we see $\diam(\beta)\leq R$ and this completes the proof.
\end{proof}

Next we derive the relative filling radius of a Riemannian manifold from its absolute filling radius.
\begin{lemma}\label{lem2}
 Let $(X^{m},g)$ be an aspherical manifold and $\pi:(\tilde{X}^{m},\tilde g)\longrightarrow (X^{m},g)$ be a Riemannian covering. Let $K\subset X$ be a compact set and $\tilde{K} = \pi^{-1}(K)$. Suppose that Conjecture \ref{Conj: fill} is true in dimension $n$.
  Let $(N^n,h)$ be a closed manifold with stabilized scalar curvature $R\ge 1$ and $f: N\longrightarrow \Tilde{K}\subset \Tilde{X}$ a distance decreasing map. If $(\tilde X,\tilde g)$ satisfies  $sys_1(\tilde X,\tilde g)>6C(n)$ in case $\pi_1(\Tilde{X})\ne 0$, where $C(n)$ is the constant from Conjecture \ref{Conj: fill} and
    $$sys_1(\tilde X,\tilde g):=\inf\{Length_{\tilde g}(\gamma):\gamma:\mathbb S^1\to \tilde X,\,[\gamma]\neq 0\in \pi_1(\tilde X)\},$$
    then there is a $(n+1)$-chain $\sigma$ in $\Tilde{X}$ with the following property: $\partial \sigma = f_\#N$ and $\sigma$ is supported in the neighborhood $\lbrace x\in \Tilde{X}, d(x, f(N))\le \tilde C\rbrace$ for some absolute constant $\tilde C=\tilde C(n, X,K)$. 
\end{lemma}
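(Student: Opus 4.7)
The plan is to use the assumed filling radius bound in dimension $n$ to obtain a filling of $\mathcal K_\# N$ in the Kuratowski embedding of $N$, and then push this filling forward into $\tilde X$ through a piecewise-constructed extension of $f$, using Lemma \ref{lem1} simplex by simplex to control diameters. Since $(N,h)$ has $\mathbb T^l$-stabilized scalar curvature $R\geq 1$, Conjecture \ref{Conj: fill} in dimension $n$ yields $r_f(N,h)\leq C(n)$, so there is a singular $(n+1)$-chain $\tau$ with $\partial\tau=\mathcal K_\# N$ supported in $U_{C(n)}(\mathcal K(N))\subset L^\infty(N)$. By barycentric subdivisions we may assume every sub-simplex of $\tau$ has image of diameter at most $\varepsilon$ in $L^\infty(N)$, where $\varepsilon>0$ will be fixed below.

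For each vertex $v$ of the subdivision pick $p_v\in N$ with $\|\mathcal K(p_v)-v\|_\infty\leq C(n)+\varepsilon$, choosing $p_v=\mathcal K^{-1}(v)$ whenever $v\in\partial\tau$. Set $G(v)=f(p_v)\in f(N)$. Since $\mathcal K$ is an isometric embedding and $f$ is distance-decreasing, adjacent vertices satisfy
$$
d_{\tilde X}(G(v_1),G(v_2))\leq d_N(p_{v_1},p_{v_2})\leq 2C(n)+3\varepsilon.
$$
On sub-simplices of $\partial\tau$ (which lie in $\mathcal K(N)$) we extend $G$ by $f\circ\mathcal K^{-1}$; on interior 1-simplices we extend by minimizing geodesics of length $\leq 2C(n)+3\varepsilon$ in $\tilde X$.

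The inductive extension to higher interior skeleta invokes Lemma \ref{lem1}. If $\pi_1(\tilde X)=0$ then $\tilde X$ is its own universal cover and Lemma \ref{lem1} applies verbatim; otherwise let $\bar\pi:\bar X\to\tilde X$ be the universal cover. Because $sys_1(\tilde X,\tilde g)>6C(n)$ by hypothesis, we fix $\varepsilon$ so small that every interior 2-simplex 1-skeleton loop in $\tilde X$ has length $\leq 6C(n)+9\varepsilon<sys_1(\tilde X,\tilde g)$, hence is null-homotopic and lifts to $\bar X$. For each such 2-simplex, lift the loop, apply Lemma \ref{lem1} on a compact enlargement $K_1\supset K$ absorbing the lifted image to obtain a fill in $\bar X$ of diameter $\leq r_2$, and project back to $\tilde X$. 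For $k\geq 3$, the boundary $\partial\Delta^k\cong S^{k-1}$ is simply connected, so the map $\partial\Delta^k\to\tilde X$ lifts automatically to $\bar X$, and a further call to Lemma \ref{lem1} with enlarged $K_{k-1}$ produces an extension of diameter $\leq r_k$. After $n+1$ such steps, $\sigma:=G_\#\tau$ is a singular $(n+1)$-chain in $\tilde X$. Since $G\circ\mathcal K=f$ on $\partial\tau$, one gets $\partial\sigma=f_\# N$; and since every simplex of $\sigma$ contains a vertex in $f(N)$ and has image of diameter $\leq r_{n+1}$, its support lies in the $\tilde C$-neighborhood of $f(N)$ with $\tilde C:=r_{n+1}$ depending only on $n$, $X$, and $K$.

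The principal obstacle is the case $\pi_1(\tilde X)\neq 0$, in which Lemma \ref{lem1} must be applied on $\bar X$ rather than $\tilde X$ directly. The systole hypothesis $sys_1(\tilde X,\tilde g)>6C(n)$ is calibrated precisely for the 2-dimensional step: with a sufficiently fine subdivision the 1-skeleton loop of each interior 2-simplex is short enough to be null-homotopic in $\tilde X$ and lift to $\bar X$; from dimension three onward the lifting is automatic by simple connectedness of $S^{k-1}$. The inductive enlargement of compact sets $K_k\subset X$ at each stage is routine bookkeeping but essential for the constants $r_k$, and hence $\tilde C$, to depend only on $n$, $X$, and $K$ rather than on the particular manifold $(N,h)$ or chain $\tau$.
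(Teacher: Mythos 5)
Your proof is correct and follows essentially the same strategy as the paper's: use the filling-radius bound to obtain a filling of $N$ in $L^\infty(N)$, subdivide it finely, map the $0$-skeleton into $f(N)$ via nearest-point projection to $\mathcal K(N)$, join by short paths on the $1$-skeleton, and then extend inductively over higher skeleta by lifting each cell boundary to the universal cover and invoking Lemma~\ref{lem1}, with the systole hypothesis ensuring that the $2$-skeleton loops are contractible. Two minor differences are worth flagging. First, the paper works with an actual polyhedral space $\mathcal P$ with $\partial\mathcal P=N$ and builds a genuine continuous map $F:\mathcal P\to\tilde X$, whereas you work directly with a singular chain $\tau$ and its barycentric subdivisions; this is conceptually equivalent, but one should be a little careful that the simplex-by-simplex construction is consistent on shared faces, which the polyhedral-space formulation handles automatically. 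Second, on $1$-simplices the paper pushes forward minimizing geodesics \emph{in $N$} via $f$ rather than taking geodesics in $\tilde X$, which keeps the entire $1$-skeleton image inside $f(N)\subset\tilde K$; this is what yields the refinement recorded in Remark~\ref{Rmk: compact} (only $sys_1(\tilde K,\tilde g)>6C(n)$ is needed), which the paper later uses in Case~2 of Theorem~\ref{Thm: vanishing}. Your version, with geodesics in $\tilde X$, suffices to prove Lemma~\ref{lem2} as stated but would not directly give that refinement without enlarging the compact set. The constant bookkeeping ($3\varepsilon$ vs.\ $\varepsilon$ per vertex, $9\varepsilon$ vs.\ $3\varepsilon$ per loop) differs harmlessly since $\varepsilon$ is free to be shrunk.
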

\begin{proof}
    The proof comes from a combination of Lemma \ref{lem1} and the argument of \cite[4.F]{Gro20}. We provide the details for completeness. From Conjecture \ref{Conj: fill} we see $r_f(N,h)\leq C(n)$ and by definition we can find a polyhedral space $\mathcal P$ with $\partial\mathcal P=N$ associated with a continuous map $i:\mathcal P\to L^\infty(N)$ such that
    \begin{itemize}
\item[(p1)] $i|_{\partial\mathcal P}$ is the Kuratowski isometric embedding of $N$;
\item[(p2)] $i(\mathcal P)$ lies in the $C(n)$-neighborhood of $i(\partial\mathcal P)$ in $L^\infty(N)$.
    \end{itemize}
    
    Denote
    $$\epsilon=\frac{1}{12}\left(sys_1(\tilde X,\tilde g)-6C(n)\right)\mbox{ when }\pi_1(\tilde X)\neq 0$$
   and $\epsilon=1$ when $\pi_1(\tilde X)=0$. By subdivision we may assume $diam(i(\Delta))\leq \epsilon$ for every simplex $\Delta$ in $\mathcal P$ and that each $1$-simplex contained in $\partial \mathcal P$ has its length no greater than $\epsilon$.
   
  In the following, we view $f$ as a map $f:\partial\mathcal P\to \tilde X$. Our goal is to construct a continuous map $F:\mathcal P\to\tilde X$ such that $F|_{\partial{\mathcal P}}=f$ and that $F({\mathcal P})$ lies in $\tilde C$-neighborhood of $f(N)$ for some absolute constant $\tilde C$. 
  
For convenience, we denote $\mathcal P_l$ to be the $l$-skeleton of $\mathcal P$. For any vertex $v\in \mathcal P_0$ we take the nearest point $p_v$ of $i(v)$ on $i(\partial\mathcal P)$ in $L^\infty(N)$ and define $$F:{\mathcal P}_0\to \tilde X,\,v\mapsto f\circ (i|_{\partial\mathcal P})^{-1}(p_v).$$ It is clear that we have the distance estimate $\dist_{L^\infty}(i(v),p_v)\leq C(n)$ from the property (p2) above. 
   
   Next we want to extend $F$ to be a map $F:\mathcal P_1\to\tilde X$. Denote $\overline{v_1v_2}$ to be an arbitrary $1$-simplex in $\mathcal P_1$. There are two possibilities: 
   \begin{itemize}
       \item if $\overline{v_1v_2}$ lies in $\partial\mathcal P$, then we define $F|_{\overline{v_1v_2}}=f|_{\overline{v_1v_2}}$;
       \item otherwise  we take one minimizing geodesic $\gamma_{v_1v_2}:[0,1]\to \partial\mathcal P$ which connects $(i|_{\partial\mathcal P})^{-1}p_{v_1}$ and $(i|_{\partial\mathcal P})^{-1}p_{v_2}$ and then define $$F|_{\overline{v_1v_2}}=(f\circ\gamma_{v_1v_2})([0,1]).$$
   \end{itemize} 
   
 We claim that the length of all paths $f|_{\overline{v_1v_2}}$ cannot exceed $2C(n)+\epsilon$. In the first case, $\overline{v_1v_2}$ is a $1$-simplex contained in $\partial\mathcal P$ and so its length is no greater than $\epsilon$, where the claim follows from the distance-decreasing property of $f$. Now we focus on the second case.  From the property (p1)  we know
   \[
\dist_N((i|_{\partial\mathcal P})^{-1}p_{v_1},(i|_{\partial\mathcal P})^{-1}p_{v_2})
   =\dist_{L^\infty}(p_{v_1},p_{v_2}).
   \]
   Using the triangle inequality we obtain
   \[
   \begin{split}
   \dist_{L^\infty}(p_{v_1},p_{v_2})&
   \leq \dist_{L^\infty}(p_{v_1},i(v_1))\\
   &\qquad\qquad+\dist_{L^\infty}(p_{v_2},i(v_2))+\dist_{L^\infty}(i(v_1),i(v_2))\\
   &\leq 2C(n)+\epsilon.
   \end{split}\]
  This yields that $\gamma_{v_1v_2}$ has its length no greater than $2C(n)+\epsilon$. Since $f$ is distance-decreasing, the length of the path $(f\circ\gamma_{v_1v_2})([0,1])$ is no greater than $2C(n)+\epsilon$ as well and this completes the proof for the claim. 
   
  We show that the map $F$ can be further extended on $\mathcal P_2$ using the previous length estimate. Take any $2$-simplex $\Delta_2=\overline{v_1v_2v_3}$ in $\mathcal P_2$. If $\Delta_2$ is contained in $\partial\mathcal P$, then we can define $F|_{\Delta_2}=f|_{ \Delta_2}$ directly. Otherwise, we have to construct the desired extension. From previous length estimate we see that the loop $F:\partial\Delta_2\to \tilde X$ has length no greater than $6C(n)+3\epsilon$. Either when $\pi_1(\tilde X)=0$ or when $sys_1(\tilde X,\tilde g)>6C(n)+3\epsilon$ in case $\pi_1(\tilde X)\neq 0$, the loop $F:\partial\Delta_2\to \tilde X$ bounds a disk and so it can be lifted to a loop $\bar\gamma:\partial\Delta_2\to \bar X$ in the universal cover $(\bar X,\bar g)$ of $(\tilde X,\tilde g)$. From Lemma \ref{lem1} we can find a disk $\bar F:\Delta_2\to \bar X$ such that $\partial\bar F=\bar\gamma$ and that $\bar F(\Delta_2)$ lies in some $\tilde C_1$-neighborhood of $\bar\gamma(\partial\Delta_2)$ in $(\bar X,\bar g)$ for some universal constant $\tilde C_1=\tilde C_1(n,X,K)$. Then we can define
  $F|_{\Delta_2}=\pi\circ \bar F$ and this gives the definition for $F:\mathcal P_2\to \tilde X$.
Since $F(\mathcal P_1)$ is contained in $f(N)$, we conclude that $F(\mathcal P_2)$ lies in $\tilde C_1$-neighborhood of $f(N)$. 

   Now we can take a new compact subset 
   $$K'=\{x\in X:\dist(x,K)\leq \tilde C_1\}$$ to guarantee $F({\mathcal P}_2)\subset \tilde K'$, where $\tilde K'=\pi^{-1}(K')$. Since the boundary $\partial\Delta_k$ of a $k$-cell is simply-connected when $k\geq 3$. The lifting-and-filling argument works when we extend $F:\mathcal P_{k-1}\to\tilde X$ to a map $F:\mathcal P_k\to\tilde X$ when $k\geq 3$. Finally we end up with a continuous map $F:\mathcal P\to \tilde X$ with $F|_{\partial\mathcal P}=f$. Since the construction is only repeated for finitely many times depending on the dimension $n$, we conclude that $F(\mathcal P)$ lies in some $\tilde C$-neighborhood of $f(N)$ in $(\tilde X,\tilde g)$ for some universal constant $\tilde C=\tilde C(n,X,K)$.
\end{proof}
\begin{remark}\label{Rmk: compact}
    From above proof we only need $sys_1(\tilde K,\tilde g)>6C(n)$ since we have $F(\partial\Delta_2)\subset f(N)\subset\tilde K$,  where $$ sys_1(\tilde K,\tilde g):=\inf\{Length_{\tilde g}(\gamma):\gamma:\mathbb S^1\to \tilde K,\,[\gamma]\neq 0\in \pi_1(\tilde X)\}.$$
\end{remark}

Now we are ready to prove Theorem \ref{Thm: vanishing}.
\begin{proof}[Proof of Theorem \ref{Thm: vanishing}]
Since we work with rational coefficient, we have 
$$f_*([M])=\frac{1}{2}f_*([\check M])$$ when $M$ is non-orientable and $\check M$ is a two-sheeted orientable covering of $M$. This suggests that we only need to deal with the case when $M$ is orientable.

We argue by contradiction. Suppose that $f_*([M])\neq 0$ in $H_n(X,\mathbb Q)$, then our goal is to deduce a contradiction from the fact that $M$ admits a smooth metric $g$ with positive scalar curvature. Up to scaling we can always assume $R(g)\geq 2$ from the compactness of $M$.

First we point out that we can do some simplifications. Up to a homotopy we may assume $f$ to be smooth without loss of generality. Moreover, we may assume $X$ to be orientable. Otherwise we take $p_X:(\hat X,\hat x)\to (X,x)$ to be the orientable covering of $X$ and $p_M:(\hat M,\hat m)\to (M,m)$ to be the covering of $M$ with $m\in f^{-1}(x)$ such that 
    $$f_*\left((p_M)_*(\pi_1(\hat M,\hat m))\right)=(p_X)_*(\pi_1(\hat X,\hat x)).$$
    Now we can lift $f$ to the map $\hat f:(\hat M,\hat m)\to (\hat X,\hat x)$. Clearly we have the following commutative diagram
    \begin{equation*}
\xymatrix{(\hat M,\hat m)\ar[r]^{\hat f}\ar[d]^{p_M}&(\hat X,\hat x)\ar[d]^{p_X}\\
(M,m)\ar[r]^{f}&(X,x).}
\end{equation*}
Notice that $p_M:(\hat M,\hat m)\to (M,m)$ is a finite covering and so $f_*((p_M)_*([\hat M]))$ is non-zero in $H_n(X,\mathbb Q)$. From the commutative diagram above we see that $\hat f_*([\hat M])$ is non-zero in $H_n(\hat X,\mathbb Q)$. In this case we can use the map $\hat f:\hat M\to \hat X$ to deduce a contradiction instead.

Next we divide the argument into two cases.

{\it Case 1. $X$ is closed.} Recall that we have $f_*([M])\neq 0$ in $H_n(X,\mathbb Q)$. It follows from the universal coefficient theorem that $$H_n(X,\mathbb Q)=H_n(X,\mathbb Z)\otimes \mathbb Q.$$ In particular, $f_*([M])$ is non-zero in $H_n(X,\mathbb Z)$. From the Poincar\'e duality we can find a cohomology $1$-class $\alpha$ in $H^1(X,\mathbb Z)$ such that $[X]\frown \alpha=f_*([M])$. It follows from the universal coefficient theorem that the class $\alpha$ can be viewed as a non-zero homomorphism $\alpha:\pi_1(X,x)\to \mathbb Z$. As a consequence, we can find an embedded closed curve $\gamma:(\mathbb S^1,1)\to (X,x)$ such that $\alpha([\gamma])\neq 0$. In particular, $\gamma$ has non-zero intersection number with $f_*([M])$. Without loss of generality we can always assume that $f$ and $\gamma$ intersect transversally.

From our construction it is easy to see that the map 
$$\gamma_*:\pi_1(\mathbb S^1,1)\to \pi_1(X,x)$$ is injective, so its image is isomorphic to $\mathbb Z$. Consider the covering $$\tilde p_X:(\tilde X,\tilde x)\to (X,x)$$ such that
\begin{equation}\label{Eq: tilde X}
(\tilde p_X)_*(\pi_1(\tilde X,\tilde x))=k\gamma_*(\pi_1(\mathbb S^1,1))
\end{equation}
for some positive integer $k$ to be determined later.
Then $\tilde X$ is an aspherical manifold with fundamental group $\mathbb Z$ and so it is homotopy equivalent to the circle $\mathbb S^1$. In particular, we have $H_n(\tilde X,\mathbb Z)=0$. 

In the following, we fix a smooth metric $g_X$ on $X$ such that the map $ f:( M, g)\to ( X, g_X)$ is distance-decreasing. Denote $\tilde g_X$ to be lifted metric of $g_X$ on $\tilde X$. We claim that $sys_1(\tilde X,\tilde g_X)>6C(n-1)$ holds for some integer $k$. 

Before we give a proof for this claim, we make a basic discussion on deck transformations. To distinguish we shall denote the pointed cover satisfying \eqref{Eq: tilde X} by $(\tilde X_k,\tilde x_k)$. Consider the universal covering $$\bar p_X:(\bar X,\bar x)\to (X,x).$$ 
From lifting this induces the universal coverings $\bar p_k:(\bar X,\bar x)\to (\tilde X_k,\tilde x_k)$ for all $k$. We denote $\mathcal D_k$ to be the deck transformation group of the universal covering $\bar X\to \tilde X_k$. It follows from \cite[Proposition 1.39 and its proof]{Hat02} that we have the group isomorphism
$$
\varphi_k:\pi_1(\tilde X_k,\tilde x_k)\to \mathcal D_k,\,[\tilde \gamma]\mapsto \tau_{[\tilde \gamma]},$$
where $\tau_{[\tilde \gamma]}$ is the unique deck transformation mapping $\bar x$ to $\bar x_{[\tilde \gamma]}=\bar\gamma(1)$, where $\bar\gamma:[0,1]\to \bar X$ is the lift of $\tilde \gamma$ with $\bar\gamma(0)=\bar x$. In particular, the deck transformation group $\mathcal D$ is isomorphic to $\mathbb Z$ and in particular we can take a generator $\mathcal T$ of $\mathcal D$. Notice from \eqref{Eq: tilde X} that $(\tilde X_k,\tilde x_k)$ is actually a cover of $(\tilde X_1,\tilde x_1)$ which sends $\pi_1(\tilde X_k,\tilde x_k)$ to $k\pi_1(\tilde X_1,\tilde x_1)$. Then it follows from the correspondence between $\pi_1(\tilde X_k,\tilde x_k)$ and $\mathcal D_k$ given by isomorphism $\varphi_k$ that $\mathcal D_k$ is the deck transformation subgroup $k\mathcal D$.

Suppose by contradiction that $sys_1(\tilde X,\tilde g_X)\leq 6C(n-1)$ for all $k$, where $\tilde X$ is the cover of $X$ satisfying the relation \eqref{Eq: tilde X}. By definition of the systole, we can find a homotopically non-trivial closed curve $\tilde \gamma: \mathbb S^1\to \tilde X$ with length no greater than $6C(n-1)+1$. By lifting $\tilde \gamma$ we can obtain a curve $\bar\gamma:[0,1]\to\bar X$ with the same length such that $\bar \gamma(1)$ lies in $ \mathcal D_k\bar\gamma(0)$. It follows from previous discussion that we can write $\bar\gamma(1)=(\tau k\mathcal T)(\bar\gamma(0))$ for some $\tau\in\mathbb Z\setminus\{0\}$. To sum up for each $k$ we can find a point $\bar x_k$ and an integer $s_k$ with $|s_k|\geq k$ such that $\dist(\bar x_k,(s_k\mathcal T)(\bar x_k))\leq 6C(n-1)+1$. Since $X$ is compact, up to deck transformations we can assume that all $\bar x_k$ lie in a fixed compact subset $\bar K_o$ of $\bar X$ with $\bar p_X(\bar K_o)=X$. Fix a point $\bar x_o\in \bar K_o$. Then we have
\[
\begin{split}\dist(\bar x_o,(s_k\mathcal T)(\bar x_o))\leq &\dist(\bar x_o,\bar x_k)+\dist((s_k\mathcal T)(\bar x_o),(s_k\mathcal T)(\bar x_k))\\
&\qquad+\dist(\bar x_k,(s_k\mathcal T)(\bar x_k))\leq D=D(\bar K_o,n).
\end{split}\]
Since we have $s_k\to\infty$ as $k\to \infty$, there are infinitely many points in $\mathcal D\bar x_o$ contained in a fixed compact subset of $\bar X$, which is impossible since the group action of $\mathcal D$ on $\bar X$ has to be discrete.

Define
$$\tilde M=\{(m',\tilde x')\in M\times\tilde X\,|\,f(m')=\tilde p_X(\tilde x')\}$$
with the induced topology. Denote $G=(f,\tilde p_X):M\times \tilde X\to X\times X$. At each point $(m',\tilde x')$ we have
$$
0\oplus T_{\tilde p_X(\tilde x')}X\subset \mathrm dG\left(T_{(m',\tilde x')}(M\times \tilde X)\right)
$$
since $\tilde p_X$ is a covering map. This implies that the map $G$ is transversal to the diagonal subset $\Delta_X=\{(x,x):x\in X\}\subset X\times X$ and so $\tilde M$ is a properly embedded submanifold of $M\times \tilde X$. In particular,  the inclusion $\tilde M\hookrightarrow M\times \tilde X$ is proper. Denote $\tilde p_M:\tilde M\to M$ and $\tilde f:\tilde M\to \tilde X$ to be the restriction of the projection maps, from $M\times \tilde X$ to its components, on $\tilde M$. Since $M$ is closed, the projection map $M\times \tilde X\to\tilde X$ is proper. Then the map $\tilde f$ is proper since $\tilde f$ is the composition of $\tilde M\hookrightarrow M\times\tilde X\to \tilde X$. We also point out that $\tilde p_M:\tilde M\to M$ is actually a covering map. To see this we take a fundamental domain $U$ of $X$ such that $\tilde p_X^{-1}(U)=\sqcup_i \tilde U_i$ where $\tilde p_X:\tilde U_i\to U$ are homeomorphisms for all $i$. Then for any domain $V$ of $ M$ with $f(V)\subset U$ it is straightforward to verify
$$\tilde p_M^{-1}(V)=\bigsqcup_i\tilde V_i\mbox{ with }\tilde V_i=\{(v,(\tilde p_X|_{\tilde U_i})^{-1}\circ f(v)):v\in V\}$$
and that $\tilde p_M:\tilde V_i\to V$ is a homeomorphism.
By definition $\tilde p_M$ is a covering map. In particular, we can lift $g$ to a metric $\tilde g$ on $\tilde M$ satisfying $R(\tilde g)\geq 2$.

By lifting we can obtain a map $\tilde \gamma:(\mathbb S^1,1)\to (\tilde X,\tilde x)$ such that $$\tilde\gamma_*(\pi_1(\mathbb S^1,1))=\pi_1(\tilde X,\tilde x).$$ To sum up we have the following commutative diagram
  \begin{equation*}
\xymatrix{(\tilde M,\tilde f^{-1}(\tilde x))\ar[r]^{\tilde f}\ar[d]^{\tilde p_M}&(\tilde X,\tilde x)\ar[d]^{\tilde p_X}&(\mathbb S^1,1)\ar[l]_{\tilde \gamma}\ar[d]^{z^k}\\
(M,m)\ar[r]^{f}&(X,x)&(\mathbb S^1,1)\ar[l]_{\gamma}.}
\end{equation*}
Let us verify that the (geometric) intersection number of $\tilde f$ and $\tilde \gamma$ equals to $kf_*([M])\cdot [\gamma]$. To see this we start with the fact that if we have $f(m_0)=x_0$ then $\tilde f$ maps $\tilde p_M^{-1}(m_0)$ one-to-one on $\tilde p_X^{-1}(x_0)$. Notice that the curve $\tilde \gamma$ passes through exactly $k$ points in $\tilde p_X^{-1}(x_0)$. Notice that the intersections of $\tilde f$ and $\tilde \gamma$ at these points all have the same sign and so this yields the desired conclusion. Clearly we can pick up just one component of $\tilde M$ such that its image under $\tilde f$ has non-zero intersection with $\tilde\gamma$. For convenience, we still denote it by $\tilde M$.

Now let us deduce the desired contradiction. From the fact $H_n(\tilde X,\mathbb Z)=0$ and also the non-zero intersection above we conclude the non-compactness of $\tilde M$. {\color{blue} } Pick up a smoothing $\tilde \rho$ of the distance function to the point $\tilde m$ in $(\tilde M,\tilde g)$ with $\Lip\tilde \rho<1$, where $\tilde g$ is the  lifted metric of $g$ on $\tilde M$. It follows from Gromov's $\mu$-bubble method (see \cite{Gro20, GZ21, Zhu23} for instance) that for $L$ large enough we can find a smooth hypersurface $\Sigma\subset \tilde \rho^{-1}([T,2T])$ separating $\tilde \rho^{-1}(T)$ and $\tilde \rho^{-1}(2T)$ with $\mathbb T^1$-stabilized scalar curvature $R\geq 1$.  Let $\tilde C$ be the absolute constant coming from Lemma \ref{lem2}. Denote $\Omega$ to be the bounded region in $\tilde M$ enclosed by $\Sigma$. From the properness of $\tilde f$ we can take $L$ large enough such that $\tilde f(\tilde M-\Omega)$ is away from the $\tilde C$-neighborhood of $\tilde \gamma(\mathbb S^1)$. In particular, $\Omega$ has non-zero intersection number with $\tilde \gamma$. Recall that $\tilde f:(\tilde M,\tilde p_M^*g)\to (\tilde X,\tilde p_X^*g_X)$ is distance-decreasing (as a lift of the map $f$) and so is the restricted map $\tilde f|_\Sigma$. Using Lemma \ref{lem2} we can find an $n$-chain $\sigma$ in $\Tilde{X}$ with $\partial\sigma=\tilde f_\#\Sigma$ such that the support of $\sigma$ avoids $\tilde\gamma(\mathbb S^1)$. Then $\Omega-\sigma$ is an $n$-cycle with non-zero intersection with $\tilde \gamma$, which contradicts to $H_{n}(\tilde X,\mathbb Z)=0$.

{\it Case 2. $X$ is non-compact.} If there exists a closed curve $\gamma$ with non-zero intersection number with $f_*([M])$, then one could follow the same argument as in Case 1 to derive the result (the only modification would be that we fix a compact subset $K$ of $X$ and establish the estimate $sys_1(\tilde K,\tilde g_X)>6C(n-1)$ for $\tilde K=\tilde p_X^{-1}(K)$ due to Remark \ref{Rmk: compact}). Therefore, we can make the assumption that each closed curve has intersection number zero with $f_*([M])$ in the following discussion.

Fix a complete metric $g_X$ on $X$ such that the map $f:(M,g)\to (X,g_X)$ is distance-decreasing. Take a compact and increasing exhaustion $\{U_i\}_{i=1}^\infty$ with $f(M)\subset U_1$. It is clear that $f_*([M])\ne 0$ in $H_n(U_i,\mathbb Z)$. By Poincar\'e duality and the universal coefficient theorem we can choose a curve $\gamma_i$ with $\partial\gamma_i\subset \partial U_i$ which has non-zero intersection number with $f_*([M])$. By our assumption $\gamma_i$ cannot be closed and so we can denote the end-points by $a_i$ and $b_i$. For the same reason the end-points $a_i$ and $b_i$ must lie in different components of $\partial U_i$ (otherwise we are able to produce a closed curve with non-zero intersection with $f_*([M])$). Let $A_i$ and $B_i$ be the components of $\partial U_i$ with $a_i\in A_i$ and $b_i\in B_i$. Choose a minimizing geodesic $l_i$ in $U_i$ connecting $A_i$ and $B_i$. Notice that $l_i-\gamma_i$ is homologuous to a closed curve in $(U_i,\partial U_i)$. Using our assumption and the diffential-topology interpretation of the intersection number we see 
\begin{align*}
  [l_i]\cdot f_*([M])=[\gamma_i]\cdot f_*([M])\ne 0.
\end{align*}
This implies that each $l_i$ always intersects with the fixed compact set $f(M)$. Thus, by taking the limit of $l_i$ as $i\to\infty$ up to 
a subsequence we can construct a geodesic line $l$ with non-zero intersection number with $f_*([M])$.

Now let $x=f(m)$ be a point in $l$ and we consider the universal cover $(\bar{X},\bar x)$ of $(X,x)$. As before, we define
$$\bar M=\{(m',\bar x')\in M\times\bar X\,|\,f(m')=\bar p_X(\bar x')\}$$
and restricted projection maps
$$\bar p_M:\bar M\to M\mbox{ and }\bar f:\bar M\to \bar X.$$
As before, we can verify that the map $\bar f$ is proper and that $\bar p_M:\bar M\to M$ is a covering map in the same way. In particular, we can lift $g$ to a metric $\bar g$ on $\bar M$ satisfying $R(\bar g)\geq 2$. By lifting we can construct a geodesic line $\bar{l}:\mathbb R\to \bar X$ and we obtain the following commutative diagram:
\begin{equation*}
\xymatrix{(\bar M,\bar f^{-1}(\bar x))\ar[r]^{\bar f}\ar[d]^{\bar p_M}&(\bar X,\bar x)\ar[d]^{\bar p_X}&(\mathbb R,0)\ar[l]_{\bar l}\ar[d]^{id}\\
(M,m)\ar[r]^{f}&(X,x)&(\mathbb R,0)\ar[l]_{l}.}
\end{equation*}
As before, one easily checks that the (geometric) intersection number of $\bar{l}$ and the map $\bar{f}: \bar{M}\longrightarrow \bar{X}$ equals that of $l$ and $f$. Again we can pass to a component of $\bar M$, still denoted by $\bar M$,
which has non-zero intersection number with $\bar l$. In particular, we conclude that $\bar{M}$ has to be non-compact, otherwise $\bar f_*([\bar M])$ is null-homologous from the contractibility of $\bar{X}$, violating the non-trivially intersecting condition.

Let $U_{\rho}$($\Tilde{l}$) be the $\rho$-neighbourhood of $\bar{l}$ in $(\bar{X},\bar p_X^*g_X)$. We claim that the preimage $\bar{f}^{-1}(U_{\rho}(\bar{l}))$ is compact for all $\rho>0$. In fact, since $l:\mathbb R\to (X,g_X)$ is a geodesic line, there exists a constant $T>0$ such that
\begin{align*}
    \dist_X(l(t),f(M))>\rho\mbox{ for all } |t|>T.
\end{align*}
Therefore we obtain
\begin{align*}
    \dist_{\bar X}(\bar{l}(t),\bar{f}(\bar{M}))>\rho\mbox{ for all } |t|>T
\end{align*}
from the fact that the covering map is a local isometry. In particular, we have
\begin{align*}
    \bar{f}^{-1}(U_{\rho}(\bar{l})) = \bar{f}^{-1}\left(U_{\rho}(\bar l|_{[-T,T]})\right),
\end{align*}
which is compact due to the properness of $\bar{f}$. Then we could follow the same argument as in Case 1 to obtain a contradiction.
\end{proof}

\begin{proof}[Proof for Corollary \ref{Cor: 5D}]
    When $k=2$ or $3$ it follows directly from Theorem \ref{Thm: 2D}, and the case when $k=5$ was handled by \cite{Gro20, CL20, CLL23}. The remaining case when $k=4$ follows from our reduction Theorem \ref{Thm: vanishing} and the verification of the generalized filling radius conjecture from the works \cite{CL20,LM23}.
\end{proof}

Now we prove the aspherical splitting theorem.

\begin{proof}[Proof of Theorem \ref{Thm: splitting}]
    If $(M,g)$ is Ricci-flat, then the proof is completed by Cheeger-Gromoll splitting theorem \cite{CG71}. Otherwise, it follows from Kazdan's deformation \cite{Kaz82} that we are able to find a new complete metric on $M$ with positive scalar curvature, still denoted by $g$ for simplicity. Our goal is to show that this case cannot happen. Doing product with circle or torus we may assume $n=4$.

Since $M$ has two ends, we can find a proper and surjective smooth function $\rho:M\to (-\infty,+\infty)$ with $\Lip \rho <1$. Denote 
$$K=\rho^{-1}([-1,1])\mbox{ and } R_0=\inf_K R(g).$$
Then it is not difficult to construct a smooth function 
$$
h:(-T_0,T_0)\to (-\infty,+\infty)
$$
such that 
\begin{itemize}
\item $h(t)\to \pm\infty$ as $t\to \mp T_0$ and $h'(t)<0$ for all $t\in (-T_0,T_0)$;
\item we have 
$$R_0\chi_{[-1,1]}+2h'+\frac{n+1}{n}h^2>0$$.
\end{itemize}

Let us set the $\mu$-bubble problem as follows. Denote $\Omega_0=\{\rho <0\}$ and consider the functional
$$
\mathcal A^h(\Omega)=\mathcal H^{n}_g(\partial\Omega)-\int_{M}(\chi_\Omega-\chi_{\Omega_0})h\circ \rho\,\mathrm d\mathcal H^{n+1}_g
$$
defined for regions $\Omega$ belonging to the class
\begin{equation*}
\mathcal C=\left\{\begin{array}{c}\mbox{Caccioppoli sets }\Omega\subset M\mbox{ such that} \\
\mbox{$\Omega\Delta \Omega_0$ has compact closure in $\{-T_0<\rho<T_0\}$}
\end{array}\right\}.
\end{equation*}
It follows from \cite{Zhu21} that we can find a smooth minimizer $\Omega_{min}$ of $\mathcal A^h$ in $\mathcal C$ (no regularity issue occurs due to the fact $n+1= 5$). Consider the variation of $\Omega_{min}$ along a compactly supported vector field $X$ with $X=\phi\nu$ on $\partial\Omega_{min}$, where $\nu$ is the outward unit normal of $\partial\Omega_{min}$. We can compute the first variation formula
$$
\delta\mathcal A^h(\phi)=\int_{\partial\Omega_{min}}(H-h\circ \rho)\phi\,\mathrm d\mathcal H^{n}_g=0
$$
where $H$ is the mean curvature of $\partial\Omega_{min} $ with respect to $\nu$. From arbitrary choice of $\phi$ we obtain $H=h\circ \rho$ on $\partial\Omega_{min}$. From the second variation formula we see
\begin{equation}\label{Eq: second variation}
\begin{split}
&2\int_{\partial\Omega_{min}}|\nabla \phi|^2\,\mathrm d\mathcal H^{n-1}_g\geq\\
&\int_{\partial\Omega_{min}}\left(R(g)-R(i^*g)+H^2+|A|^2+2\partial_\nu(h\circ \rho)\right)\phi^2\,\mathrm d\mathcal H^{n}_g,
\end{split}
\end{equation}
where $R(i^*g)$ and $A$ are the scalar curvature and second fundamental form of $\partial\Omega_{min}$ respectively. Using the facts $H=h\circ \rho$, $\Lip \rho<1$ and $h'(t)<0$ we deduce
$$
R(g)+H^2+|A|^2+2\partial_\nu(h\circ \rho)\geq R(g)+2h'\circ \rho+\frac{n+1}{n}(h\circ \rho)^2>0,
$$
where the right hand side is no less than $\left(R_0\chi_{[-1,1]}+2h'+\frac{n+1}{n}h^2\right)\circ\rho$ and so it is positive.
This implies that the operator 
$$-\Delta+\frac{1}{2}R(i^*g)$$
is positive for each component of $\partial\Omega_{min}$ and the same thing holds for the conformal Laplacian operator. In particular, we can find a smooth metric on $\partial\Omega_{min}$ with positive scalar curvature. 

Notice that the boundary $\partial\Omega_0$ represents a non-zero $\mathbb Q$-homology class in the orientable manifold $M$ since we can construct a line intersecting $\partial\Omega_0$ only once (after counting algebraic multiplicity). From the definition of the class $\mathcal C$ we see that $\partial\Omega_{min}$ is homologous to $\partial\Omega_0$ and so represents a non-zero $\mathbb Q$-homology class as well. This contradicts to Corollary \ref{Cor: 5D} since it follows from previous discussion that $\partial\Omega_{min}$ admits a smooth metric with positive scalar curvature.
\end{proof}

\appendix
\section{vanishing theorem in dimension three and four}\label{Append}
\begin{theorem}\label{Thm: 2D}
     Let $M^n$, $n\in\{2,3\}$, be a closed manifold admitting a smooth metric with positive scalar curvature and $X$ be an aspherical topological space. Then for any continuous map $f:M\to X$ we have $f_*([M])=0$ in $H_n(X,\mathbb Q)$.
\end{theorem}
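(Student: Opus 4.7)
The plan is to combine the topological classification of closed manifolds with positive scalar curvature in dimensions two and three with the fact that an aspherical space $X$ is a $K(\pi_1(X),1)$, so that maps into $X$ are classified up to homotopy by their action on $\pi_1$. As in the proof of Theorem \ref{Thm: vanishing}, I would first reduce to the case that $M$ is orientable by passing to the two-sheeted oriented cover $\check M\to M$ and using $f_*([M])=\tfrac12 (f\circ p)_*([\check M])$ in $H_n(X,\mathbb Q)$. For $n=2$ the only closed orientable surface admitting positive scalar curvature is $\mathbb S^2$, and since $X$ is aspherical $\pi_2(X)=0$, so $f$ is null-homotopic and $f_*([\mathbb S^2])=0$ follows immediately.

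For $n=3$ I would apply the topological classification (Perelman's work together with Gromov--Lawson--Schoen--Yau surgery) to write
\[
M \;\cong\; (\mathbb S^3/\Gamma_1)\,\#\cdots\#\,(\mathbb S^3/\Gamma_k)\,\#\, l(\mathbb S^2\times \mathbb S^1).
\]
The pinch map $p\colon M\to \bigvee_i M_i$ collapses each connect-sum sphere to the wedge point and sends $[M]$ to the sum of the fundamental classes $[M_i]$ of the prime summands. Because $\pi_2(X)=0$, each connect-sum $\mathbb S^2\subset M$ maps null-homotopically into $X$, so after a homotopy of $f$ one may assume $f\simeq \bar f\circ p$ for some $\bar f\colon \bigvee_i M_i\to X$. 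It therefore suffices to prove $\bar f_*([M_i])=0$ for each prime summand separately.

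For $M_i=\mathbb S^3/\Gamma_i$ I would precompose with the finite universal covering $\mathbb S^3\to \mathbb S^3/\Gamma_i$: since $\mathbb S^3$ is simply connected the resulting map lifts to the contractible universal cover $\tilde X$ and is null-homotopic, yielding $|\Gamma_i|\cdot \bar f_*([M_i])=0$ in $H_3(X,\mathbb Z)$ and hence $\bar f_*([M_i])=0$ in $H_3(X,\mathbb Q)$. For $M_i=\mathbb S^2\times \mathbb S^1$, the projection $\pi\colon \mathbb S^2\times \mathbb S^1\to \mathbb S^1$ induces an isomorphism on $\pi_1$, and since $X$ is a $K(\pi_1(X),1)$ the restriction $\bar f|_{M_i}$ is homotopic to a composition $h\circ \pi$ for some $h\colon \mathbb S^1\to X$; then $\bar f_*([M_i])$ factors through $H_3(\mathbb S^1)=0$. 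The main obstacle here is not analytic but is the appeal to the (highly nontrivial but now classical) topological classification of positive scalar curvature $3$-manifolds; once that is taken as a black box, the remaining arguments are soft obstruction theory for maps into a $K(\pi,1)$.
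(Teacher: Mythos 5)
Your proposal is correct, and for $n=3$ it takes a genuinely different route from the paper after the common starting point (the topological classification of closed orientable PSC $3$-manifolds as $(\mathbb S^3/\Gamma_1)\#\cdots\#(\mathbb S^3/\Gamma_k)\# l(\mathbb S^2\times\mathbb S^1)$). The paper first passes to a finite cover $\tilde M$ of $M$ diffeomorphic to a connected sum of copies of $\mathbb S^2\times\mathbb S^1$ (using that the free product $\Gamma_1*\cdots*\Gamma_k*F_l$ is virtually free), and then uses $\pi_2(X)=0$ to ``reverse the $0$-surgeries'' of $\tilde M$ inside $X$ up to homotopy, concluding $(f\circ\pi)_*([\tilde M])=0$ and dividing by the covering degree. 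You instead keep $M$ itself, use $\pi_2(X)=0$ to homotope $f$ to be constant on the connect-sum spheres so that $f$ factors through the pinch map $M\to\bigvee_i M_i$, and then kill each prime summand separately: the space forms by the degree-$|\Gamma_i|$ covering $\mathbb S^3\to\mathbb S^3/\Gamma_i$ together with $\pi_3(X)=0$, and the $\mathbb S^2\times\mathbb S^1$ summands by the $K(\pi,1)$ classification of maps, factoring through $H_3(\mathbb S^1)=0$. Your version avoids the group-theoretic input needed to produce the torsion-free finite cover, at the cost of invoking the homotopy classification of maps into an aspherical space; the paper's version handles all summands uniformly once the cover is chosen. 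One cosmetic remark: for the space-form summands you do not need the contractible universal cover of $X$ (which is slightly delicate for a general aspherical \emph{topological space}); the composite $\mathbb S^3\to\mathbb S^3/\Gamma_i\to X$ is null-homotopic simply because $\pi_3(X)=0$.
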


The proof of Theorem \ref{Thm: 2D} has already appeared in \cite{Wang19, HHLS23} but we still present a proof for convenience of the reader.

\begin{proof}
    Up to lifting we may assume $M$ to be orientable. In dimension two, the only orientable closed surface is the sphere. Therefore, $f_*([M])$ lies in the image of the Hurewicz map, which has to be zero from the aspherical property of $X$. In dimension three, $M$ must be diffeomorphic to the connected sum
    $$(\mathbb S^3/\Gamma_1)\#\cdots \#(\mathbb S^3/\Gamma_k)\#l(\mathbb S^2\times \mathbb S^1).$$
In particular, $M$ has a finite cover $\tilde M$ diffeomorphic to the connected sum of finitely many $(\mathbb S^2\times \mathbb S^1)$'s, which comes from $0$-surgeries to $\mathbb S^3$. From the aspherical property of $X$ we can perform the reverse $2$-surgeries to $\tilde M$ in $X$ to see $(f\circ\pi)([\tilde M])=0$ in $H_3(X,\mathbb Q)$, where $\pi:\tilde M\to M$ is the covering map. Clearly $f_*([M])=c(f\circ\pi)([\tilde M])=0$ for some rational constant $c$.
\end{proof}

\end{document}